\numberwithin{equation}{section}
\newtheorem{teiri}{Theorem}[section]
\newtheorem{prop}[teiri]{Proposition}
\newtheorem{kei}[teiri]{Corollary}
\theoremstyle{definition}
\newtheorem{rei}[teiri]{Example}
\newtheorem{teigi}[teiri]{Definition}
\begin{document}

\allowdisplaybreaks

\newcommand{\arXivNumber}{1701.00931}

\renewcommand{\PaperNumber}{017}

\FirstPageHeading

\ShortArticleName{Klein's Fundamental 2-Form of Second Kind for the $C_{ab}$ Curves}

\ArticleName{Klein's Fundamental 2-Form of Second Kind\\ for the $\boldsymbol{C_{ab}}$ Curves}

\Author{Joe SUZUKI}

\AuthorNameForHeading{J.~Suzuki}

\Address{Department of Mathematics, Osaka University, Machikaneyama Toyonaka,\\ Osaka 560-0043, Japan}
\Email{\href{mailto:suzuki@math.sci.osaka-u.ac.jp}{suzuki@math.sci.osaka-u.ac.jp}}

\ArticleDates{Received January 05, 2017, in f\/inal form March 11, 2017; Published online March 16, 2017}

\Abstract{In this paper, we derive the exact formula of Klein's fundamental 2-form of second kind for the so-called $C_{ab}$ curves. The problem was initially solved by Klein in the 19th century for the hyper-elliptic curves, but little progress had been seen for its extension for more than 100 years. Recently, it has been addressed by several authors, and was solved for subclasses of the $C_{ab}$ curves whereas they found a way to f\/ind its individual solution numerically. The formula gives a standard cohomological basis for the curves, and has many applications in algebraic geometry, physics, and applied mathematics, not just analyzing sigma functions in a general way.}

\Keywords{$C_{ab}$ curves; Klein's fundamental 2-form of second kind; cohomological basis; sym\-metry}

\Classification{14H42; 14H50; 14H55}

\section{Introduction}

Since Abel, Jacobi, Poincar\'e, and Riemann established its framework, theories of Abelian and modular functions associated with algebraic curves have been of crucial importance in algebraic geometry, physics, and applied mathematics. Algebraic curves in this paper are intended as compact Riemann surfaces.

The study of the hyper-elliptic curves goes back to the beginning of the 20th century, and these appear in much detail in advanced text-books such as Baker~\cite{baker} and Cassels and Flynn~\cite{cf}. However, little has been considered for more general curves than the hyper-elliptic curves. In this paper, we consider a class of curves ($C_{ab}$ curves) in the form
\begin{gather}\label{eq1}
\sum_{i,j}c_{i,j}x^iy^j=0,
\end{gather}
where $(i,j)$ range over $ai+by\leq ab$, $0\leq i\leq b$, $0\leq j\leq a$ for some mutually prime integers $a$, $b$, and $c_{i,j}$ are constants such that~(\ref{eq1}) implies $ \sum\limits_{i,j}ic_{i,j}x^{i-1}y^j\not=0$ or $ \sum\limits_{i,j}jc_{i,j}x^{i}y^{j-1}\not=0$.

This paper studies cohomologies of $C_{ab}$ curves in terms of which the Klein--Weierstrass construction of multivariate Abelian and sigma functions is made possible. It does not seek further theories but focuses on the description of cohomologies of these curves.

According to the Riemann--Roch theorem, the entire holomorphic dif\/ferentials make a vector space over $\mathbb C$ of dimension $g$. For example, we can take them as
\begin{gather}\label{eq97}
 du_i(x,y)=\frac{x^{i-1}dx}{2y} ,
\end{gather}
$i=1,2,\dots,g$ for the hyper-elliptic curve
\begin{gather}\label{eq3}
y^2=\sum_{i=0}^{2g+1}c_ix^i
\end{gather}
with $c_{i,0}=c_i$, $c_{i,1}=0$, $i=0,\dots,2g+1$.

We consider a 2-form $((x,y),(z,w))\mapsto R((x,y),(z,w))dxdz$ over $C\times C$ that has a (double) pole only at $(z,w)=(x,y)$ and is normalized:
\begin{gather*}\lim_{(z,w)\rightarrow (x,y)}R((x,y),(z,w))(x-z)^2=1.\end{gather*}
Such a 2-form can be written as
\begin{gather}\label{eq8}
R((x,y),(z,w)):=\frac{d}{dz}\Omega((x,y),(z,w))+\sum_{i=1}^g\frac{du_i(x,y)}{dx}\frac{dr_i(z,w)}{dz},
\end{gather}
using dif\/ferentials $\{dr_i\}_{i=1}^g$ of the second kind that have poles only at inf\/inity and a meromorphic function $\Omega((x,y),(z,w))$. If we further require symmetry:
\begin{gather*}R((x,y),(z,w))=R((z,w),(x,y)) ,\end{gather*}
then such an $\{dr_i\}$ is unique modulo operation of ${\rm Sp}(2g,{\mathbb Z})$ and the space generated by $\{du_i\}$ (see~\cite{b} for hyper-elliptic curves and \cite{BB, AA} for general curves). We call such a normalized symmetric $R((x,y),(z,w))dxdz$ Klein's fundamental 2-form of second kind~\cite{klein1,klein2}.

For example, for the hyper-elliptic curve (\ref{eq3}), such a 2-form can be obtained using the canonical meromorphic dif\/ferentials $\{dr_i\}_{i=1}^g$
\begin{gather}\label{eq99}
dr_i(z,w)=\sum_{k=i}^{2g-i}c_{k+1+i}(k+1-i)\frac{z^{k}}{2w}dz
\end{gather}
and 1-form
\begin{gather*}\Omega((x,y),(z,w)):=\frac{y+w}{2(x-z)y}dx .\end{gather*}

It is known that the normalized symmetric bilinear $R((x,y),(z,w))$ can be expressed using~(\ref{eq8}) for~$C_{ab}$~\cite{nakayashiki} and telescopic curves~\cite{ayano} although $\Omega((x,y),(z,w))$ needs more extensions. However, for the canonical meromorphic dif\/ferentials $\{dr_i\}_{i=1}^g$, no exact formula has been given. For example, $C_{34}$ and $C_{35}$ curves and $C_{45}$ curves were studied in~\cite{DD} and~\cite{FF}, respectively.
Recently, the reference \cite{GG} addressed $C_{3,g+1}$ curves with genus $g$. These results dealt with a~specif\/ic class of curves and failed to obtain the formula for general $C_{ab}$ curves with arbitrary mutually prime~$a$,~$b$. As a~result, still many researchers are either numerically calculating or algorithmically computing $\{dr_i\}_{i=1}^g$ given $\{c_{i,j}\}$ and $\{du_i\}_{i=1}^g$. This paper extends~(\ref{eq99}) and obtains the general formula in a closed form for the $C_{ab}$ curves. It contains all the existing results and has many applications including algebraic expression of sigma functions~\cite{nakayashiki}, def\/ining equations of the Jacobian varieties, etc.~\cite{b,onishi}.

This paper is organized as follows: Section~\ref{section2} sets up the holomorphic dif\/ferentials $\{du_i\}_{i=1}^g$ and gives background of this paper.
Section~\ref{section3} states and proves the main result (formula) and gives two typical examples both of which extend the previous cases. In the last section, we raise an open problem.

\section{Background}\label{section2}

Let $a$, $b$ be mutually prime positive integers, and $C$ the curve def\/ined by
\begin{gather*}F(x,y):=\sum_{i,j}c_{i,j}x^iy^j=0\end{gather*}
with a unique point $\cal O$ at which the zero orders of $x$ and $y$ are $a$ and $b$, respectively, where $(i,j)$ range over
\begin{gather*}D:=\{(i,j)\,|\,ai+bj\leq ab,\, 0\leq i\leq b,\, 0\leq j\leq a\},\end{gather*}
and $c_{i,j}$ are constants such that either $ \sum\limits_{i,j} ic_{i,j}x^{i-1}y^j\not=0$ or $ \sum\limits_{i,j}jc_{i,j}x^iy^{j-1}\not=0$ for each $(x,y)$ on the curve.

We consider the set of 1-forms \cite{nakayashiki}
\begin{gather*}du_{i,j}=\frac{x^iy^j}{ \frac{\partial F}{\partial y}(x,y)}dx,\end{gather*}
where $(i,j)$ range over
\begin{gather*}J(a,b):=\{(i,j)\,|\,i,j\geq 0,\, ai+bj\leq ab-a-b\}.\end{gather*}

We know by the general theory that for $g$ variable points $(x_1,y_1), \dots,(x_g,y_g)$ on $C$, the sum of integrals from $\cal O$ to those $g$ points
\begin{gather*}u=(u_{i,j})_{(i,j)\in J(a,b)}=\sum_{(i,j)\in J(a,b)}\int_{\cal O}^{(x_i,y_i)}du_{i,j}\end{gather*}
f\/ill the whole space ${\mathbb C}^g$, where the weights of the variables $u_{i,j}$ are $ab-a(i+1)-b(j+1)$. If we regard the weight of each coef\/f\/icient $c_{i,j}$ in (\ref{eq1}) is $ai+bi-ab$, the weights assigned to the dif\/ferentials render $F(x,y)$ homogeneous.

\begin{prop}\label{proposition1}
The set $J(a,b)$ has
\begin{gather*}\#J(a,b)=g=\frac{(a-1)(b-1)}{2}\end{gather*}
elements, and the zero orders of $\{du_{i,j}\}$ are nonnegative and different.
\end{prop}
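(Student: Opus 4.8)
The plan is to count the lattice points in $J(a,b)$ and then separately analyze the zero orders of the differentials $du_{i,j}$ at the point $\mathcal O$. For the counting, I would recognize $J(a,b) = \{(i,j) : i,j \geq 0,\ ai+bj \leq ab-a-b\}$ as the set of lattice points strictly below (and on the axes side of) the line $ai+bj = ab$ after the shift by $(1,1)$; equivalently, the nonnegative pairs $(i,j)$ with $a(i+1)+b(j+1) < ab$ together with $a(i+1)+b(j+1) \leq ab - 1$ — but the cleanest route is the classical fact that, since $\gcd(a,b)=1$, the number of nonnegative integer solutions of $ai+bj \leq ab-a-b$ equals the number of non-representable Frobenius gaps of the numerical semigroup $\langle a,b\rangle$, which is exactly the Frobenius/Sylvester count $\tfrac{(a-1)(b-1)}{2}$. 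Alternatively, one can count directly: for each $j$ with $0 \leq j \leq a-2$, the admissible $i$ are $0 \leq i \leq \lfloor (ab-a-b-bj)/a \rfloor = b - 1 - \lceil b(j+1)/a \rceil$, and summing $\sum_{j=0}^{a-2}\big(b - \lceil b(j+1)/a\rceil\big)$ collapses via $\sum_{k=1}^{a-1}\lceil bk/a\rceil = \tfrac{(a-1)(b+1)}{2} + \tfrac{a-1}{2}$ (a standard Gauss-type sum, using $\gcd(a,b)=1$ so no term is an integer except the implicit endpoint) to give $g = \tfrac{(a-1)(b-1)}{2}$.

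For the statement about zero orders, I would use the local description at $\mathcal O$: since $x$ and $y$ have zero orders $a$ and $b$ respectively (equivalently, poles of orders $a$ and $b$, depending on sign convention; here $\mathcal O$ is where $x,y$ vanish to these orders under the local parameter), and since $dx$ has a known zero order at $\mathcal O$ determined by $a$, while $\partial F/\partial y$ vanishes to the order dictated by the Newton-polygon structure of a $C_{ab}$ curve (this is exactly where the $-a-b$ shift in $ab-a-b$ comes from), the zero order of $du_{i,j}$ at $\mathcal O$ is an affine function of $ai+bj$, namely it equals $ab - a - b - (ai+bj)$ after the appropriate normalization — the weight bookkeeping already indicated in the paragraph before the proposition. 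Then nonnegativity is immediate from the defining inequality $ai+bj \leq ab-a-b$, and distinctness follows because $ai+bj$ takes distinct values on distinct elements of $J(a,b)$: indeed $ai+bj = ai'+bj'$ with $0 \leq i,i' \leq b-1$ forces $a \mid (j-j')$ and then $j=j'$, $i=i'$, using $\gcd(a,b)=1$ and the bound $i \leq b-1$ implicit in $J(a,b)$.

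The main obstacle I anticipate is not the combinatorics but pinning down the precise local expansion of $\partial F/\partial y$ (and hence of $du_{i,j}$) at $\mathcal O$ rigorously: one must choose a local parameter $t$ with $x \sim t^{-a}$, $y \sim t^{-b}$ (or the vanishing analogue), expand $F$ using the fact that the monomials $x^iy^j$ with $ai+bj = ab$ on the boundary of $D$ dominate, and verify that $\partial F/\partial y$ has exactly the claimed order with nonzero leading coefficient — this uses the hypothesis that either $\sum ic_{i,j}x^{i-1}y^j$ or $\sum jc_{i,j}x^iy^{j-1}$ is nonvanishing, i.e., $\mathcal O$ is a smooth point. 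Once that local computation is in hand, the zero order of $du_{i,j}$ is $2g-2$ minus contributions in a way that matches the weight $ab - a - b - ai - bj \geq 0$, and the proposition's three assertions (cardinality $g$, nonnegativity, distinctness) all drop out. I would present the counting argument first as it is self-contained, then invoke Nakayashiki's local analysis of $C_{ab}$ curves \cite{nakayashiki} for the order computation, filling in the distinctness via the $\gcd(a,b)=1$ argument above.
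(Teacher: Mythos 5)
Your overall plan is the same as the paper's: count $J(a,b)$, compute $\operatorname{ord}_{\mathcal O}(du_{i,j})$ as an affine function of $ai+bj$ with shift $ab-a-b$, get nonnegativity from the defining inequality, and distinctness from $\gcd(a,b)=1$. Where you genuinely differ is the count: the paper's appendix uses a pairing argument, setting $f(i)$ to be the number of admissible $j$ for each $i$ and exploiting the symmetry $f(i)+f(b-2-i)=a-1$, $f(b-1)=0$, to land on $\tfrac{(a-1)(b-1)}{2}$ in two lines; your identification with the gap count of the numerical semigroup $\langle a,b\rangle$ (via the bijection $(i,j)\mapsto ab-a-b-(ai+bj)$ onto the gap set, which uses the symmetry of that semigroup) is more conceptual and has the added virtue of exhibiting the zero orders of the $du_{i,j}$ as exactly the Weierstrass gaps at $\mathcal O$, which the paper does not make explicit. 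You also supply the coprimality argument for distinctness that the paper leaves as an assertion.

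Two points need repair, though neither is fatal to the plan. First, your auxiliary ceiling-sum identity is wrong: since $bk/a$ is never an integer for $1\le k\le a-1$, one has $\sum_{k=1}^{a-1}\lceil bk/a\rceil=\sum_{k=1}^{a-1}\lfloor bk/a\rfloor+(a-1)=\tfrac{(a-1)(b-1)}{2}+(a-1)=\tfrac{(a-1)(b+1)}{2}$, not $\tfrac{(a-1)(b+1)}{2}+\tfrac{a-1}{2}$; with your stated value the direct count would come out to $\tfrac{(a-1)(b-2)}{2}$ rather than $g$, whereas with the corrected sum $(a-1)b-\tfrac{(a-1)(b+1)}{2}=\tfrac{(a-1)(b-1)}{2}$ as required (the Frobenius-gap route is unaffected). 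Second, in the distinctness step the bounds are mismatched: from $a(i-i')=b(j'-j)$ and $\gcd(a,b)=1$ you get $a\mid(j-j')$, and the conclusion $j=j'$ uses $0\le j,j'\le a-1$ (which holds in $J(a,b)$); the bound $i,i'\le b-1$ would instead be paired with $b\mid(i-i')$. On the order computation itself your level of rigor matches the paper's: the paper simply writes $\operatorname{ord}_{\mathcal O}(du_{i,j})=-(ai+bj)+ab-a-b$ from the orders of $x$, $y$, $dx$ and $F_y$ at $\mathcal O$, exactly the local analysis you propose to delegate to Nakayashiki, so no gap opens there.
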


\begin{teigi}
The 2-form $R((x,y),(z,w))dxdz$ on $C\times C$ is called the fundamental 2-form of the second kind if the following conditions are satisf\/ied
\begin{enumerate}\itemsep=0pt
\item[1)] it is symmetric: $R((x,y),(z,w))=R((z,w),(x,y))$,
\item[2)] it has its only pole along the diagonal of $C\times C$, and
\item[3)] in the vicinity of each point, it is expanded in power series as
\begin{gather*}R((x,y),(z,w))dxdz=\frac{dt_{xy}dt_{zw}}{(t_{x,y}-t_{z,w})^2}+O(1)\end{gather*}
as $(z,w)\rightarrow (x,y)$, where $t_{x,y}$ and $t_{z,w}$ are local coordinates of points $(x,y)$ and $(z,w)$, respectively.
\end{enumerate}
\end{teigi}

We shall look for a realization of $R((x,y),(z,w))$ in the form
\begin{gather*}R((x,y),(z,w))=\frac{G((x,y),(z,w))}{(x-z)^2\frac{\partial F}{\partial y}(x,y)\frac{\partial F}{\partial w}(z,w)},\end{gather*}
where $G((x,y),(z,w))$ is a symmetric polynomial in its variables.

\begin{rei}[hyper-elliptic curves \cite{CC,EE}] For the hyper-elliptic curve (\ref{eq3}), in which $a=2$ and $b=2g+1$, where $g$ is the genus of the curve $C$, then (\ref{eq1}) expresses a~hyper-elliptic curve, the meromorphic function on $C\times C$
\begin{gather*}\Omega((x,y),(z,w))=\frac{y+w}{2y(x-z)}\end{gather*}
and dif\/ferentials (\ref{eq97}), there exists $\{dr_{i}\}_{i=1}^g$ such that (\ref{eq8}) is symmetric. In fact,
\begin{gather}
\frac{d}{dz}\Omega((x,y),(z,w))=\frac{ \frac{dw}{dz}(x-z)+(y+w)}{2y(x-z)^2}=\frac{(x-z)\sum\limits_{k=1}^{2g+1}kc_kz^{k-1}+2w(y+w)}{4yw(x-z)^2}\nonumber\\
\hphantom{\frac{d}{dz}\Omega((x,y),(z,w))}{} =
\frac{ x\sum\limits_{k=1}^{2g+1}kc_kz^{k-1}-\sum\limits_{k=1}^{2g+1}(k-2)c_kz^i+2c_0+2yw}{4yw(x-z)^2}.\label{eq199}
\end{gather}
If we add
\begin{gather*}(x-z)^2\sum_{k=1}^gx^{k-1}\sum_{h=2k+1}^{2g+1}c_h(h-2k)z^{h-k-1}\end{gather*}
to the numerator of (\ref{eq199}), we obtain
\begin{gather*}G((x,y),(z,w))=2\sum_{k\colon \, {\rm even}}c_k2x^{k/2}z^{k/2}+\sum_{k\colon \, {\rm odd}} c_k\big(x^{\frac{k+1}{2}}z^{\frac{k-1}{2}}+x^{\frac{k-1}{2}}z^{\frac{k+1}{2}}\big) +2c_0+2yw,
\end{gather*}
where
\begin{gather*}
-(k-2)z^k+kz^{k-1}x+(x-z)^2\sum_{h=1}^{i/2}(k-2h)z^{k-h-1}x^{h-1}\\
\qquad{} =-(k-2l)z^{k-l+1}x^{l-1}+(k-2l+2)z^{k-l}x^l+(x-z)^2\sum_{h=l}^{\lfloor k/2\rfloor }(k-2h)z^{k-h-1}x^{h-1}
\end{gather*}
for $l=1,2,\dots,\lfloor i/2 \rfloor$ has been applied ($\lfloor k/2 \rfloor=k/2$ and $(k-1)/2$ when $k$ is even and odd, respectively), which means that by choosing
\begin{gather*}\frac{dr_i}{dz}(z,w)=\sum_{h=2i+1}^{2g+1}\frac{(h-2i)z^{h-i-1}}{2w},\end{gather*}
we obtain
\begin{gather*}R((x,y),(z,w))=R((z,w),(x,y))\end{gather*}
and
\begin{gather*}\lim_{(z,w)\rightarrow (x,y)}R((x,y),(z,w))(x-z)^2=\lim_{(z,w)\rightarrow (x,y)}\frac{G((x,y),(z,w))}{4yw}=1.\end{gather*}
\end{rei}

\section[The fundamental 2-form of the second kind for the $C_{ab}$ curves]{The fundamental 2-form of the second kind for the $\boldsymbol{C_{ab}}$ curves}\label{section3}

Hereafter, we denote $ F_z:=\frac{\partial F(z,w)}{\partial z}$, $ F_w:=\frac{\partial F(z,w)}{\partial w}$, $ H:=\frac{F(y,z)-F(w,z)}{y-z}$, $ H_z:=\frac{\partial H}{\partial z}$, and $ H_w:=\frac{\partial H}{\partial w}$.

We f\/ind symmetric $R((x,y),(z,w))$ for the meromorphic function
\begin{gather*}\Omega((x,y),(z,w)):=\frac{H}{(x-z)F_y}.\end{gather*}

To this end, if we note $ \frac{dw}{dz}=-\frac{F_z}{F_w}$, we have
\begin{gather}\label{eq919}
\frac{d\Omega}{dz}=\frac{(H_z+H_w\frac{dw}{dz})(x-z)+H}{(x-z)^2F_y}=\frac{(H_zF_w-H_wF_z)(x-z)+HF_w}{(x-z)^2F_yF_w}.
\end{gather}

Let $ g_j:=\sum\limits_{i\colon \, 0\leq ai+bj\leq ab}c_{i,j}z^i$, $ g_j':=\frac{\partial g_j}{\partial z}$, and $ h_j:=\sum\limits_{i=0}^{j-1}w^iy^{j-1-i}$ for $j=0,\dots,a$. Then, one checks
\begin{gather}\label{eq16}
F_w=\sum_{j=0}jw^{j-1}g_j,\qquad H=\sum_{j=0}^a h_jg_j,\qquad H_z=\sum_{j=0}^a h_jg_j',\qquad H_w=\sum_{j=0}^a \frac{\partial h_j}{\partial w}g_j.
\end{gather}
Let
\begin{gather*}I(i,j):=\left(-w^i\frac{\partial h_j}{\partial w}+jw^{j-1}h_i- \frac{\partial {h}_{i}}{\partial y}w^j\right)g_i'g_j(x-z)+\big(jw^{j-1}h_i-jh_{j-1}w^i\big)g_ig_j\end{gather*}
for $i,j=0,\dots,a$. Then, from (\ref{eq16}) and that the arithmetic is modulo $F(z,w)=\sum\limits_{j=0}^a w^jg_j=0$, we have
\begin{gather*}\sum_{i=0}^a\sum_{j=0}^a I(i,j)=\sum_{i=0}^a\left\{-w^ig_i'H_w+h_ig_i'F_w-\frac{\partial h_i}{\partial y}g_i'\sum_{j=0}^a w^jg^j\right\}(x-z)\\
\hphantom{\sum_{i=0}^a\sum_{j=0}^a I(i,j)=}{} +
\sum_{i=0}^a\left\{h_ig_iF_w-w^ig_i\sum_{j=0}^ajh_{j-1}g_j\right\},\end{gather*}
which coincides with the numerator of (\ref{eq919}).

We seek $ \{\frac{dr_{i,j}}{dz}\}_{(i,j)\in J(a,b)}$ such that the 2-form
\begin{gather*}R((x,y),(z,w)):=\frac{d\Omega_{(z,w)}}{dz}+\sum_{(i,j)\in J(a,b)}\frac{du_{i,j}}{dx}\frac{dr_{i,j}}{dz}\end{gather*}
is symmetric, in other words, its numerator
\begin{gather*}\sum_{u=0}^{a}\sum_{v=0}^{a}
I(u,v)+ (x-z)^2\sum_{(i,j)\in J(a,b)}u_{i,j}(x,y)r_{i,j}(z,w)\end{gather*}
is symmetric, where
\begin{gather*}\frac{dr_{i,j}}{dz}=\frac{r_{i,j}(z,w)}{F_w}\end{gather*}
is the dif\/ferential of the second kind given the dif\/ferentials of the f\/irst kind{\samepage
\begin{gather*}\frac{du_{i,j}}{dx}=\frac{u_{i,j}(x,y)}{F_y}\end{gather*}
with $u_{i,j}(x,y)=x^iy^j$.}

Let $\underline{\rm mod}(\alpha,\beta):=\alpha-\lfloor\alpha/\beta\rfloor*\beta$, and $\overline{\rm mod}(\alpha,\beta):=
\begin{cases}
\underline{\rm mod}(\alpha,\beta)&\underline{\rm mod}(\alpha,\beta)\not=0,\\
\alpha&\underline{\rm mod}(\alpha,\beta)=0
\end{cases}$
for positive integers $\alpha$, $\beta$, and $\lceil\gamma\rceil$ and $\lfloor\gamma\rfloor$ are the smallest integer no less than $\gamma$ and the largest integer no more than $\gamma$, respectively for positive real~$\gamma$. For example, $\overline{\rm mod}(3,2)=\underline{\rm mod}(3,2)=2$, $2=\overline{\rm mod}(4,2)\not=\underline{\rm mod}(4,2)=0$, etc.

Then, the following property plays an important role in the derivation of the main theorem:

\begin{prop}\label{proposition2} If $m\leq n$, then
\begin{gather}
I(m,n)+I(n,m) =mw^{m-1}y^{n-1}g_mg_n'(x-z)+mw^{m-1}y^{n-1}g_mg_n\nonumber\\
\hphantom{I(m,n)+I(n,m) =}{} +mw^{n-1}y^{m-1}g_m'g_n(x-z)+mw^{n-1}y^{m-1}g_mg_n \nonumber\\
\hphantom{I(m,n)+I(n,m) =}{}
-\sum_{k=m+1}^{n-1}\big[\{(n-k)g_mg_n'+(k-m)g_m'g_n\}(x-z)+(n-m)g_mg_n\big]\nonumber\\
\hphantom{I(m,n)+I(n,m) =}{}\times w^{k-1}y^{m+n-k-1}. \label{eq401}
\end{gather}
\end{prop}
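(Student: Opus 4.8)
The plan is to decompose the symmetric quantity $I(m,n)+I(n,m)$ according to which of the monomials $g_m'g_n\,(x-z)$, $g_mg_n'\,(x-z)$, $g_mg_n$ it multiplies, and then to verify the three resulting coefficient identities, which are statements in $\mathbb{Q}[w,y]$ alone. Substituting the definition of $I$ and using $g_n'g_m=g_mg_n'$ and $g_ng_m=g_mg_n$, one obtains
\begin{gather*}
I(m,n)+I(n,m)=C_1\,g_m'g_n\,(x-z)+C_2\,g_mg_n'\,(x-z)+C_0\,g_mg_n,
\end{gather*}
with
\begin{gather*}
C_1:=-w^m\frac{\partial h_n}{\partial w}+nw^{n-1}h_m-w^n\frac{\partial h_m}{\partial y},\qquad
C_2:=-w^n\frac{\partial h_m}{\partial w}+mw^{m-1}h_n-w^m\frac{\partial h_n}{\partial y},\\
C_0:=nw^{n-1}h_m-nw^mh_{n-1}+mw^{m-1}h_n-mw^nh_{m-1}.
\end{gather*}
So Proposition~\ref{proposition2} will follow once $C_1$, $C_2$, $C_0$ are shown to coincide with the coefficients of $g_m'g_n\,(x-z)$, $g_mg_n'\,(x-z)$, $g_mg_n$ on the right of~(\ref{eq401}).

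The first simplification I would make is to eliminate the $C_0$-identity. From $h_j=\sum_{p=0}^{j-1}w^py^{j-1-p}$ one reads off, after shifting the summation index in $\partial h_j/\partial w$, the relation $\partial h_j/\partial w+\partial h_j/\partial y=jh_{j-1}$; using it for $j=m$ and $j=n$ in the formulas above gives $C_0=C_1+C_2$. Since the coefficient of $g_mg_n$ on the right of~(\ref{eq401}) is likewise the sum of the coefficients of $g_m'g_n\,(x-z)$ and $g_mg_n'\,(x-z)$ there (because $(k-m)+(n-k)=n-m$), it remains only to prove
\begin{gather*}
C_1=mw^{n-1}y^{m-1}-\sum_{k=m+1}^{n-1}(k-m)w^{k-1}y^{m+n-k-1},\qquad
C_2=mw^{m-1}y^{n-1}-\sum_{k=m+1}^{n-1}(n-k)w^{k-1}y^{m+n-k-1}.
\end{gather*}

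Both are verified by expanding into monomials. Each of $h_m$, $h_n$, $\partial h_n/\partial w$, $\partial h_m/\partial y$ is an explicit sum of monomials of total degree $m+n-2$, so writing a typical one as $w^{k-1}y^{m+n-1-k}$, the three summands of $C_1$ contribute over $m+1\le k\le m+n-1$ with coefficient $-(k-m)$, over $n\le k\le n+m-1$ with coefficient $n$, and over $n+1\le k\le n+m-1$ with coefficient $-(m+n-k)$, respectively. Collecting: for $k\ge n+1$ the three add to $-(k-m)+n-(m+n-k)=0$; at $k=n$ only the first two ranges are present and the total is $-(n-m)+n=m$; for $m+1\le k\le n-1$ only the first range is present, with coefficient $-(k-m)$. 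This is precisely the claimed formula for $C_1$, and the computation for $C_2$ is identical in shape, the three ranges now being $n+1\le k\le n+m-1$ (coefficient $-(k-n)$), $m\le k\le m+n-1$ (coefficient $m$), and $m+1\le k\le m+n-1$ (coefficient $-(m+n-k)$), with total $0$ for $k\ge n+1$, equal to $m$ at $k=m$, and equal to $-(n-k)$ for $m+1\le k\le n-1$. When $m=n$ all the sums are empty and $C_1=C_2=-w^m(\partial h_m/\partial w+\partial h_m/\partial y)+mw^{m-1}h_m=m(w^{m-1}h_m-w^mh_{m-1})=mw^{m-1}y^{m-1}$, as it should be.

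The main obstacle is the bookkeeping in this last step: one must keep the three overlapping ranges of $k$ straight, and the cancellation pattern genuinely relies on the hypothesis $m\le n$, which determines how those ranges sit relative to one another. One subtlety worth noting is that $g_m'g_n$ and $g_mg_n'$ are distinct polynomials in the $c_{i,j}$, so the $C_1$- and $C_2$-identities really are two separate statements; and although their right-hand sides are interchanged by the substitution $w\leftrightarrow y$ together with the reversal $k\mapsto m+n-k$ of the summation variable, their left-hand sides are not, so this symmetry does not shorten the argument.
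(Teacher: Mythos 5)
Your proposal is correct and follows essentially the same route as the paper's own proof in Appendix~B: both reduce~(\ref{eq401}) to identities of polynomials in $w$, $y$ obtained by expanding $h_m$, $h_n$ and their derivatives into monomials $w^{k-1}y^{m+n-k-1}$ and matching coefficients over the relevant ranges of~$k$, with the hypothesis $m\le n$ governing how those ranges overlap. The only organizational difference is that you treat $I(m,n)+I(n,m)$ at once and dispose of the $g_mg_n$-coefficient via $\partial h_j/\partial w+\partial h_j/\partial y=jh_{j-1}$ (so only two expansions are needed), whereas the paper expands each $I(i,j)$ separately, subtracting the leading term $jw^{j-1}y^{i-1}$ and splitting into the cases $i\geq j+1$, $i=j$, $i\leq j-1$ before specializing to $(i,j)=(n,m)$ and $(m,n)$.
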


\begin{teiri}\label{theorem1}
$R((x,y),(z,w))$ is symmetric when
\begin{gather}
 r_{i,j}(z,w) =\sum_{u\leq j}\sum_{r}\sum_{s}c_{r,u}c_{s,j+1}u(s-i-1)z^{r+s-i-2}w^{u-1}\label{eq21}\\
\hphantom{r_{i,j}(z,w) =}{} +\sum_{j+1\leq v}\sum_{r}\sum_{s}c_{r,j+1}c_{s,v}(j+1)(r-i-1)z^{r+s-i-2}w^{v-1}\label{eq22}\\
\hphantom{r_{i,j}(z,w) =}{} +\sum_{u\leq j}\sum_{j+2\leq v}\sum_{r}\sum_{s}c_{r,u}c_{s,v}\{(i+1)(v-u)-(j+1-u)s\nonumber\\
\hphantom{r_{i,j}(z,w) =+\sum_{u\leq j}\sum_{j+2\leq v}\sum_{r}\sum_{s}}{} -(v-j-1)r\}z^{r+s-i-2}w^{u+v-j-2}, \label{eq91}
\end{gather}
where $(r,s)$ range over $s\geq i+2$ in $D$ for the first term, over $r\geq i+2$ in $D$ for the second term, and over $(j+1-u)s+(v-j-1)r\leq (i+1)(v-u)$ and $r+s\geq i+2$ in $D$ for the last term. The symmetric value is given by $ R((x,y),(z,w))=\frac{G((x,y),(z,w))}{(x-z)^2\frac{\partial F(x,y)}{\partial x}\frac{\partial F(z,w)}{\partial z}}$ with
\begin{gather}G((x,y),(z,w))=\sum_{u=0}^aG_{u,u}^{(12)}
+\sum_{u=0}^a\sum_{u< v}\big\{G_{u,v}^{(12)}+G_{u,v}^{(34)}+G^{(5)}_{u,v}\big\},\nonumber\\ \label{eq24}
G^{(12)}_{u,v}:=u\sum_r\sum_sc_{r,u}c_{s,v}x^rz^sy^{u-1}w^{v-1},
\\ \label{eq25}
G^{(34)}_{u,v}:=u\sum_r\sum_sc_{r,u}c_{s,v}x^sz^ry^{v-1}w^{u-1},
\end{gather}
and
\begin{gather}
 G^{(5)}_{u,v} :=-\sum_{k=u+1}^{v-1}\sum_r\sum_sc_{r,u}c_{s,v}
\big\{
\overline{\rm mod}\{(k-u)s+(v-k)r,v-u\}
z^{ \lceil\frac{(k-u)s+(v-k)r}{v-u}\rceil}
\nonumber\\
\hphantom{G^{(5)}_{u,v} :=}{}
\times x^{ \lfloor\frac{(k-u)r+(v-k)s}{v-u}\rfloor}+
\underline{\rm mod}\{(k-u)r+(v-k)s,v-u\}
z^{ \lfloor\frac{(k-u)s+(v-k)r}{v-u}\rfloor}
\nonumber\\
\hphantom{G^{(5)}_{u,v} :=}{}
\times x^{ \lceil\frac{(k-u)r+(v-k)s}{v-u}\rceil}\big\} w^{k-1}y^{u+v-k-1}. \label{eq181}
\end{gather}
\end{teiri}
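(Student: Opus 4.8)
The plan is to reduce the statement to a single polynomial identity. Write the proposed $R$ as $R=\mathcal N/\bigl((x-z)^2F_yF_w\bigr)$ with
\[
\mathcal N:=\Bigl(\text{numerator of }d\Omega/dz\Bigr)+(x-z)^2\!\!\!\sum_{(i,j)\in J(a,b)}\!\!\!u_{i,j}(x,y)\,r_{i,j}(z,w),
\]
and prove $\mathcal N=G((x,y),(z,w))$ for the $r_{i,j}$ of (\ref{eq21})--(\ref{eq91}) and the $G$ of (\ref{eq24})--(\ref{eq181}). Given this, $R=G/\bigl((x-z)^2F_yF_w\bigr)$ has the three defining properties. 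Symmetry: the right side of (\ref{eq24}) is invariant under $(x,y)\leftrightarrow(z,w)$, because that swap carries $G^{(12)}_{u,v}$ to $G^{(34)}_{u,v}$ and back, fixes each $G^{(12)}_{u,u}$ (relabel $r\leftrightarrow s$), and fixes each $G^{(5)}_{u,v}$ termwise in $(r,s)$ — under $k\mapsto u+v-k$ the two arguments $(k-u)s+(v-k)r$ and $(k-u)r+(v-k)s$ occurring in (\ref{eq181}) interchange, and, combined with $x\leftrightarrow z$, with $\lceil\cdot\rceil+\lfloor\cdot\rfloor=r+s$, and with $\overline{\rm mod}(\cdot,N)=\underline{\rm mod}(\cdot,N)$ except when $N$ divides the argument (then both equal $N$), this exchanges the two monomials of (\ref{eq181}). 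Pole on the diagonal: as $(z,w)\to(x,y)$ one has $h_j(w,y)\to jy^{j-1}$, hence $H\to F_y$ and $\Omega=(x-z)^{-1}+O(1)$, so $d\Omega/dz\cdot dx\,dz$ has principal part $dx\,dz/(x-z)^2$, while $\sum(du_{i,j}/dx)(dr_{i,j}/dz)$ is regular there since the $dr_{i,j}$ are of the second kind and the $du_{i,j}$ holomorphic. So the whole task is the identity $\mathcal N=G$.

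To attack it, replace the numerator of $d\Omega/dz$ — which (by the computation after (\ref{eq16})) is $\sum_{u,v}I(u,v)$ modulo $F(z,w)=0$ — by $\sum_uI(u,u)+\sum_{u<v}(I(u,v)+I(v,u))$ and substitute Proposition~\ref{proposition2}. The diagonal terms and the first two lines of (\ref{eq401}) are handled by the elementary identity
\[
g_m(z)g_n'(z)(x-z)+g_m(z)g_n(z)=g_m(z)g_n(x)-(x-z)^2g_m(z)\widetilde g_n(x,z),\qquad\widetilde g_n:=\frac{g_n(x)-g_n(z)-(x-z)g_n'(z)}{(x-z)^2}
\]
(with $\widetilde g_n$ a polynomial, the second divided difference of $g_n$): it turns $I(u,u)$ into $G^{(12)}_{u,u}$ and the first two lines of (\ref{eq401}) into $G^{(34)}_{m,n}+G^{(12)}_{m,n}$, each minus an explicit $(x-z)^2$-multiple built from powers of $w,y$, one factor $g_\ast(z)$ and one factor $\widetilde g_\ast(x,z)$. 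Expanding $\widetilde g_j(x,z)=\sum_sc_{s,j}\sum_{q\le s-2}(s-1-q)x^qz^{s-2-q}$ and reading off the coefficient of $x^iy^j$ in those $(x-z)^2$-multiples produces exactly lines (\ref{eq21}) and (\ref{eq22}) of $r_{i,j}$ (the range conditions ``$\ge i+2$ in $D$'' are precisely $q=i\le s-2$ together with $(r,u),(s,v)\in D$).

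The last line of (\ref{eq401}), which must yield $G^{(5)}$, is the heart of the matter and the main obstacle. After expanding the $g$'s, its coefficient of $w^{k-1}y^{m+n-k-1}$ is $-\sum_{r,s}c_{r,m}c_{s,n}\bigl(\alpha z^{\delta-1}x+(N-\alpha)z^{\delta}\bigr)$ with $\delta=r+s$, $N=n-m$, $\alpha=(n-k)s+(k-m)r$; this binomial is \emph{not} congruent modulo $(x-z)^2$ to any symmetric polynomial, so no $(x-z)^2$-correction symmetrizes it termwise. The device is the telescoping identity, proved by induction on $L$,
\[
\alpha z^{\delta-1}x+(N-\alpha)z^{\delta}+(x-z)^2\!\sum_{i=0}^{L-1}\bigl(\alpha-(i+1)N\bigr)x^iz^{\delta-2-i}=(\alpha-LN)x^{L+1}z^{\delta-L-1}+\bigl((L+1)N-\alpha\bigr)x^Lz^{\delta-L}.
\]
Taking $L=\lfloor\alpha/N\rfloor$ and using $\overline{\rm mod}(N\delta-\alpha,N)+\underline{\rm mod}(\alpha,N)=N$ (since $N\delta-\alpha\equiv-\alpha\pmod N$ and the two residues lie in $\{1,\dots,N\}$ and $\{0,\dots,N-1\}$) together with $\lceil(N\delta-\alpha)/N\rceil=\delta-\lfloor\alpha/N\rfloor$ (etc.), the right-hand side becomes exactly the bracket appearing in (\ref{eq181}). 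Hence the last line of (\ref{eq401}) equals $G^{(5)}_{m,n}$ minus $(x-z)^2\sum_ix^i\sum_{k,r,s}c_{r,m}c_{s,n}\bigl((i+1)N-\alpha\bigr)z^{r+s-2-i}w^{k-1}y^{m+n-k-1}$; extracting the coefficient of $x^iy^j$ (so $k=m+n-j-1$, $(m,n)=(u,v)$, $u\le j$, $v\ge j+2$) and substituting $\alpha=(j+1-u)s+(v-j-1)r$, $N=v-u$ reproduces line (\ref{eq91}) of $r_{i,j}$, its range conditions being $r+s\ge i+2$ (nonnegative $z$-exponent) and $0\le i\le L-1$ (length of the telescoping sum).

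Finally, one checks that every $x^iy^j$ occurring in the assembled $(x-z)^2$-corrections really has $(i,j)\in J(a,b)$, so that they sum to $\sum_{(i,j)\in J(a,b)}u_{i,j}(x,y)r_{i,j}(z,w)$ as claimed: each such $j$ lies in $\{u-1,\,v-1,\,m+n-k-1\}$ with the relevant pairs in $D$, and each such $i$ is at most the corresponding $z$-degree minus $2$, so $ai+bj\le ab-a-b$ follows just as for the diagonal case (e.g.\ $aq+b(u-1)\le a(s-2)+bu-b\le ab-a-b$). Together with Proposition~\ref{proposition1} this completes the argument; the one genuinely hard step is the last line of (\ref{eq401}) — the telescoping identity together with the $\overline{\rm mod}/\underline{\rm mod}$ and ceiling/floor bookkeeping that recasts its right-hand side as the closed form (\ref{eq181}).
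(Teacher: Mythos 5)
Your proposal is correct and follows essentially the same route as the paper: decompose the numerator of $d\Omega/dz$ through Proposition~\ref{proposition2}, reduce each piece modulo $(x-z)^2$ by telescoping (your divided-difference identity is exactly the paper's $t^{(12)},\Delta^{(12)}$ and $t^{(34)},\Delta^{(34)}$ step, and your telescoping identity with $L=\lfloor\alpha/N\rfloor$ is the paper's $t^{(5)}_k,\Delta^{(5)}_k$ step), then collect the $(x-z)^2$-corrections into the $r_{i,j}$. The only organisational difference is that you verify the symmetry of $G$ directly via the involution $k\mapsto u+v-k$ together with the $\overline{\rm mod}/\underline{\rm mod}$ and floor/ceiling bookkeeping, whereas the paper pairs $k$ with $u+v-k$ and runs the case analysis $p+q=r+s$ versus $p+q+1=r+s$; these are the same computation.

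One point should be flagged explicitly instead of being folded into the claim that you ``reproduce'' (\ref{eq91}): the range you actually derive for the third piece, $0\le i\le L-1$, i.e., $(i+1)(v-u)\le(j+1-u)s+(v-j-1)r$, is the \emph{reverse} of the inequality printed in the theorem statement. Your direction is the correct one: it is what the paper's own proof uses (the condition $1\le i+1\le p$ with $p=\lfloor\{(v-k)s+(k-u)r\}/(v-u)\rfloor$, before the final ``Hence'' sentence flips the inequality), and it is the only choice consistent with the classical hyperelliptic formula (\ref{eq99}) --- for $y^2=\sum_kc_kx^k$ your range gives exactly (\ref{eq99}), while the printed range truncates to $r\le 2i+2$ and already fails to symmetrize the $c_3$-term of an elliptic curve. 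So you have tacitly corrected a misprint in the statement (as you also did by writing the denominator with $F_yF_w$ rather than the printed $\partial F/\partial x\,\partial F/\partial z$); say so explicitly, since as literally printed the range condition in (\ref{eq91}) is not what your (correct) argument proves.
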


\begin{proof} First of all, we prove
\begin{gather*}u\sum_r\sum_s c_{r,u}c_{s,v}\big\{t^{(12)}(x,z)+(x-z)^2\Delta^{(12)}(x,z)\big\}w^{u-1}y^{v-1}=G^{(12)}_{u,v}\end{gather*}
for $0\leq u\leq v\leq a$, where
\begin{gather*} t^{(12)}(x,z):=sz^{r+s-1}(x-z)+z^{r+s}\end{gather*}
and
\begin{gather*}\Delta^{(12)}(x,z):=\sum_{k=1}^{s-1}kz^{r+k-1}x^{s-k-1}.\end{gather*}
In fact, we see
\begin{gather*}t^{(12)}(x,z)+(x-z)^2\Delta^{(12)}(x,z)\\
\qquad{} =-(s-l)z^{r+s-l+1}x^{l-1}+(s-l+1)z^{r+s-l}x^l+(x-z)^2\sum_{k=1}^{s-l}kz^{r+k-1}x^{s-k-1}\end{gather*}
for $l=1,\dots,s$. In particular,
\begin{gather*}\sum_r\sum_s uc_{r,u}c_{s,v}\Delta^{(12)}(x,z)y^{v-1}w^{u-1}\\
\qquad{} =\sum_{(i,j)\in J(a,b)}x^iy^j\sum_r\sum_s uc_{r,u}c_{s,j+1}(s-i-1)z^{r+s-i-2}w^{u-1},\end{gather*}
where $(r,s)$ range over $s\geq i+1$ in $D$. Thus, from the f\/irst and second terms in (\ref{eq401}), we obtain~(\ref{eq21}) and~(\ref{eq24}).

Similarly,
\begin{gather*}u\sum_r\sum_s c_{r,u}c_{s,v}\big\{t^{(34)}(x,z)+(x-z)^2\Delta^{(34)}(x,z)\big\}w^{v-1}y^{u-1}=G^{(34)}_{u,v}\end{gather*}
for $0\leq u\leq v\leq a$,
\begin{gather*} t^{(34)}(x,z):=rz^{r+s-1}(x-z)+z^{r+s}, \qquad 
\Delta^{(34)}(x,z):=\sum_{k=1}^{r-1}kz^{s+k-1}x^{r-k-1},\end{gather*}
and
\begin{gather*}\sum_r\sum_s uc_{r,u}c_{s,v}\Delta^{(34)}(x,z)y^{u-1}w^{v-1}\\
\qquad{} =\sum_{(i,j)\in J(a,b)}x^iy^j\sum_r\sum_s uc_{r,j+1}c_{s,v}(r-i-1)z^{r+s-i-2}w^{v-1},\end{gather*}
where $(r,s)$ range over $r\geq i+1$ in $D$. Thus, from the third and fourth terms in (\ref{eq401}), we obtain~(\ref{eq22}) and (\ref{eq25}).

On the other hand, we claim
\begin{gather}
 t^{(5)}_k(x,z)+(x-z)^2\Delta_k^{(5)}(x,z) =-\{(p+1)(v-u)-(v-k)s-(k-u)r\}z^{r+s-p}x^p\nonumber\\
 \qquad{} -\{(v-k)s+(k-u)r-p(v-u)\}z^{r+s-p-1}x^{p+1}\label{eq101}
\end{gather}
for $0\leq u\leq v\leq a$, where
\begin{gather*}t_k^{(5)}(x,z):=\{s(v-k)+r(k-u)\}z^{r+s-1}(x-z)+(v-u)z^{r+s},\\
\Delta_k^{(5)}(x,z):=(x-z)^2\sum_{h=1}^p\{(v-k)s+(k-u)r-(v-u)h\}z^{r+s-h-1}x^{h-1},\end{gather*}
where $p$ is a unique integer such that
\begin{gather}\label{eq661}
p(v-u)\leq (v-k)s+(k-u)r <(p+1)(v-u).
\end{gather}
In fact, we see
\begin{gather*}
 t_k^{(5)}(x,z)+(x-z)^2\Delta_k^{(5)}(x,z) =-\{s(v-k)+r(k-u)-l(v-u)\}z^{r+s-l+1}x^{l-1}\\
 \qquad{} + \{s(v-k)+r(k-u)-(l-1)(v-u)\}z^{r+s-l}x^l\\
 \qquad{} +(x-z)^2\sum_{h=l}^p\{(v-k)s+(k-u)r-(v-u)h\}z^{r+s-h-1}x^{h-1}
\end{gather*}
for $l=1,2,\dots,p+1$.

Similarly, we obtain
\begin{gather}
 t^{(5)}_{u+v-k}(x,z)+(x-z)^2\Delta_{u+v-k}^{(5)}(x,z) =-\{(q+1)(v-u)-(v-k)r-(k-u)s\}z^{r+s-q}x^q\nonumber\\
 \qquad{} -\{(v-k)r+(k-u)s-q(v-u)\}z^{r+s-q-1}x^{q+1}\label{eq102}
\end{gather}
for $0\leq u\leq v\leq a$, where
\begin{gather*}t_{u+v-k}^{(5)}(x,z):=\{r(v-k)+s(k-u)\}z^{r+s-1}(x-z)+(v-u)z^{r+s},\\
\Delta_{u+v-k}^{(5)}(x,z):=(x-z)^2\sum_{h=1}^p\{(v-k)r+(k-u)s-(v-u)h\}z^{r+s-h-1}x^{h-1},\end{gather*}
where $q$ is a unique integer such that
\begin{gather}\label{eq662}
q(v-u)\leq (v-k)r+(k-u)s <(q+1)(v-u).
\end{gather}

From (\ref{eq661}) and (\ref{eq662}), we have two possibilities: $p+q=r+s$ and $p+q+1=r+s$. For the former case, (\ref{eq101}) and (\ref{eq102}) are $-(v-u)z^qx^p$ and $-(v-u)z^px^q$, respectively; and for the latter case, (\ref{eq101}) and (\ref{eq102}) are
\begin{gather*}-\{(v-k)s+(k-u)r-q(v-u)\}z^{q+1}x^p+\{(v-k)s+(k-u)r-p(v-u)\}z^{q}x^{p+1}\end{gather*}
and{\samepage
\begin{gather*}-\{(v-k)r+(k-u)s-p(v-u)\}z^{p+1}x^q+\{(v-k)r+(k-u)s-q(v-u)\}z^{p}x^{q+1},\end{gather*}
respectively.}

Since
\begin{gather*}p=\left\lfloor \frac{(v-k)s+(k-u)r}{v-u}\right\rfloor,\qquad
q=\left\lfloor \frac{(v-k)r+(k-u)s}{v-u}\right\rfloor,\\
(v-k)s+(k-u)r-p(v-u)=\underline{\rm mod}\{(v-k)s+(k-u)r,v-u\},\\
(v-k)r+(k-u)s-q(v-u)=\overline{\rm mod}\{(v-k)r+(k-u)s,v-u\},\end{gather*}
we have (\ref{eq181}), which means we obtain (\ref{eq91}) as well
\begin{gather*}
\sum_{k=u+1}^{v-1}\sum_r\sum_s c_{r,u}c_{s,v}\Delta_k^{(5)}(x,z)y^{u+v-k-1}w^{k-1} \\
\qquad {} =\sum_{(i,j)\in J(a,b)}x^iy^j\sum_r\sum_s uc_{r,j+1}c_{s,v} \{(i+1)(v-u)-(j+1-u)s\\
\qquad \hphantom{=\sum_{(i,j)\in J(a,b)}x^iy^j\sum_r\sum_s}{}-(v-j-1)r\}z^{r+s-i-2}w^{v-1},
\end{gather*}
where we have chosen for each $(i,j)\in J(a,b)$, $i=h-1$ and $j=u+v-k-1$, so that $1\leq i+1\leq p$ and $u+1\leq u+v-j-1 \leq v-1$. Hence, $(u,v)$ and $(s,r)$ need to satisfy
\begin{gather*}u\leq j,\qquad j+2\leq v,\qquad (i+1)(v-u)\geq (v-k)s+(k-u)r,
\end{gather*}
respectively. This completes the proof.
\end{proof}

\begin{kei}
\begin{gather*}\lim_{(z,w)\rightarrow (x,y)}R((z,w),(x,y))(x-z)^2=1.\end{gather*}
\end{kei}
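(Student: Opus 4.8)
The plan is to reduce the statement, using the symmetry established in Theorem~\ref{theorem1}, to a one-point computation of the leading term of $d\Omega/dz$. Since $R((z,w),(x,y))=R((x,y),(z,w))$, it suffices to prove $\lim_{(z,w)\to(x,y)}R((x,y),(z,w))(x-z)^2=1$, and I would first do this at a point $(x,y)\in C$ with $\frac{\partial F}{\partial y}(x,y)\ne 0$; there $x$ is a local coordinate, so condition~3) of the definition is exactly this limit. Recall from Section~\ref{section3} that $R((x,y),(z,w))=\frac{d\Omega}{dz}+\sum_{(i,j)\in J(a,b)}\frac{u_{i,j}(x,y)}{F_y}\cdot\frac{r_{i,j}(z,w)}{F_w}$, where $F_y=\frac{\partial F}{\partial y}(x,y)$, $u_{i,j}(x,y)=x^iy^j$, and $r_{i,j}$ is the polynomial of Theorem~\ref{theorem1}.

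First I would dispose of the correction sum. At our point the factors $F_y$ and $F_w(z,w)$ do not vanish in a neighbourhood of $(z,w)=(x,y)$, so $\sum_{(i,j)}\frac{du_{i,j}}{dx}\frac{dr_{i,j}}{dz}=\sum_{(i,j)}\frac{x^iy^j\,r_{i,j}(z,w)}{F_y F_w}$ stays bounded there; multiplying by $(x-z)^2$ it contributes $0$ in the limit, whence $\lim R((x,y),(z,w))(x-z)^2=\lim\frac{d\Omega}{dz}(x-z)^2$.

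Next I would evaluate that limit from~(\ref{eq919}): it equals $\lim\frac{(H_zF_w-H_wF_z)(x-z)+HF_w}{F_y\,F_w}$. As $(z,w)\to(x,y)$ the factor $(x-z)\to 0$ annihilates the first summand of the numerator, the polynomials $H_z$, $H_w$, $F_z$, $F_w$ being bounded; and $F_w\to\frac{\partial F}{\partial y}(x,y)=F_y$. The key observation is that the polynomial $H=\sum_{j}h_jg_j$ specializes at $(z,w)=(x,y)$ to $F_y$ itself: since $h_j|_{w=y}=\sum_{i=0}^{j-1}y^{j-1}=jy^{j-1}$, we get $H|_{(z,w)=(x,y)}=\sum_j jy^{j-1}\sum_i c_{i,j}x^i=\sum_{i,j}jc_{i,j}x^iy^{j-1}=\frac{\partial F}{\partial y}(x,y)$. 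Hence $\lim\frac{d\Omega}{dz}(x-z)^2=\frac{0+F_y^2}{F_y^2}=1$, proving the corollary at every point where $\frac{\partial F}{\partial y}\ne 0$.

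The step needing the most care is the finitely many points with $\frac{\partial F}{\partial y}(x,y)=0$, at which $x$ is not a local parameter and the ratio above is $0/0$; there condition~3) must be re-examined with the correct local coordinate $t_{x,y}$, using that $\frac{\partial F}{\partial x}(x,y)\ne 0$ at such a point (the defining property of a $C_{ab}$ curve) so that $y$ plays that role, the computation above then going through with $x$ and $y$ interchanged, while the point $\mathcal{O}$ is handled through its own local parameter. As a cross-check, one may instead verify directly from the closed form (\ref{eq24})--(\ref{eq181}) that $G((x,y),(x,y))=\big(\frac{\partial F}{\partial y}(x,y)\big)^2$ as a polynomial identity --- this reduces, via $\big((k-u)s+(v-k)r\big)+\big((k-u)r+(v-k)s\big)=(v-u)(r+s)$, to the statement that the two $\mathrm{mod}$-terms in $G^{(5)}_{u,v}$ combine to $v-u$ off the exceptional locus --- and then read off the limit from Theorem~\ref{theorem1}; the combinatorial bookkeeping in $G$ is the only real effort there, which the $d\Omega/dz$ route avoids entirely.
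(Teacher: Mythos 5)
Your argument is correct, but it takes a genuinely different route from the paper. The paper proves the corollary by working directly with the closed form of $G$ from Theorem~\ref{theorem1}: setting $f_u:=\sum_r c_{r,u}x^r$, it computes the limits of $G^{(12)}_{u,v}$, $G^{(34)}_{u,v}$, $G^{(5)}_{u,v}$ as $(z,w)\to(x,y)$ (namely $uf_uf_vy^{u+v-2}$, $uf_uf_vy^{u+v-2}$, and $-(v-u)(v-u-1)f_uf_vy^{u+v-2}$) and then sums, using the curve relation $\sum_v f_vy^v=0$ repeatedly, to get $G\to\big(\sum_u uf_uy^{u-1}\big)^2=F_y^2$, which matches the limit of the denominator. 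That is exactly the ``cross-check'' you sketch at the end but do not carry out; note, however, that $G((x,y),(x,y))=F_y^2$ is \emph{not} a polynomial identity as you state --- it holds only modulo $F(x,y)=0$, which is precisely why the paper's computation invokes $\sum_v f_vy^v=0$. Your main route avoids the combinatorics of $G$ altogether: you discard the second-kind correction $\sum\frac{du_{i,j}}{dx}\frac{dr_{i,j}}{dz}$ because it is bounded near a point with $F_y\neq0$ and gets killed by $(x-z)^2$, and you reduce the limit of $(x-z)^2\,d\Omega/dz$ via (\ref{eq919}) to the key specialization $H|_{(z,w)=(x,y)}=\sum_j jy^{j-1}g_j(x)=F_y$, together with $F_w\to F_y$. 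This is simpler and more structural (it would prove the normalization for \emph{any} choice of second-kind differentials making $R$ symmetric, since it never uses the explicit $r_{i,j}$), whereas the paper's computation doubles as a consistency check of the explicit formula (\ref{eq24})--(\ref{eq181}). Your caveat about points with $F_y=0$ and about $\mathcal O$ is fair but remains a sketch; note the paper's own proof has the same limitation, since dividing by the limit $F_y^2$ also presupposes $F_y\neq0$, the honest statement at the remaining finitely many points being the local-coordinate normalization in condition~3) of the definition.
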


\begin{proof}
Let $ f_u:=\sum\limits_{0\leq ar+bu\leq ab, r\geq 0}c_{r,u}x^{r}$. Then, $G^{(12)}_{u,v}$, $G^{(34)}_{u,v}$, and $G^{(5)}_{u,v}$ converge to $uf_uf_v$, $uf_uf_v$, and $-(v-u)(v-u-1)y^{u+v-2}$, respectively. Thus, $G((x,y),(z,w))$ converges to
\begin{gather*}
\sum_{u=0}^auf_u^2y^{2u-2}+\sum_{u=0}^a\sum_{u<v} 2uf_uf_vy^{u+v-2}-\sum_{u=0}^a\sum_{u<v}(v-u)(v-u-1)f_uf_vy^{u+v-2}\\
\qquad{} =\sum_{u=0}^auf_u^2y^{2u-2}+\sum_{u=0}^a\sum_{u<v}\big\{(u+v)-(u-v)^2\big\}f_uf_vy^{u+v-2}\\
\qquad{} =\frac{1}{2}\sum_{u=0}^a\sum_{v=0}^a\big\{u+v-u^2-v^2+2uv\big\}f_uf_vy^{u+v-2}\\
\qquad{} =\sum_{u=0}^a\sum_{v=0}^auvf_uf_vy^{u+v-2}=\sum_{u=1}^auf_uy^{u-1}\sum_{v=1}^avf_vy^{v-1}=\left\{\sum_{u=1}^auf_uy^{u-1}\right\}^2=F_y^2,
\end{gather*}
where
\begin{gather*}\sum_{u=0}^a\sum_{v=0}^auf_uf_vy^{u+v-2}=\sum_{u=1}^ay^{u-2}\sum_{v=0}^af_vy^v=0\end{gather*}
and
\begin{gather*}\sum_{u=0}^a\sum_{v=0}^avf_uf_vy^{u+v-2}=\sum_{u=0}^a\sum_{v=0}^au^2f_uf_vy^{u+v-2}=\sum_{u=0}^a\sum_{v=0}^av^2f_uf_vy^{u+v-2}=0\end{gather*}
have been applied. On the other hand, the denominator of $R((z,w),(x,y))(x-z)^2$ converges to~$F_y^2$ as well. This completes the proof.
\end{proof}

\begin{rei}[generalized hyper-elliptic curves]\rm
For the curve $F(x,y)=y^2+yf_1+f_0=0$,
\begin{gather*}
G((x,y),(z,w))=2yw+f_1w+g_1y+f_1g_1\\
\hphantom{G((x,y),(z,w))=}{}
-\sum_{r\colon \, {\rm odd}}c_{r,0}\big(z^{\frac{r+1}{2}}x^{\frac{r-1}{2}}+z^{\frac{r-1}{2}}x^{\frac{r+1}{2}}\big) -\sum_{r\colon \, {\rm even}}2c_{r,0}z^{\frac{r}{2}}x^{\frac{r}{2}},
\end{gather*}
when
\begin{gather*}
r_{i,0}(z,w)=\sum_{r\geq i+2}\sum_{s\geq 0}c_{r,1}c_{s,1}(r-i-1)z^{r+s-i-2}\\
\hphantom{r_{i,0}(z,w)=}{}
+\sum_{r\geq i+2}c_{r,1}(r-i-1)z^{r-i-2}+\sum_{r=i+2}^{2i+2}c_{r,0}(2i+2-r)
\end{gather*}
and converges to
\begin{gather*}2y^2+2f_1y+f_1^2+2\big(f_1y+y^2\big)=(2y+f_1)^2=F_y^2\end{gather*}
as $(z,w)\rightarrow (x,y)$.
\end{rei}
\begin{rei}[cyclic curves, super-elliptic curves \cite{gal}]
For the curve $F(x,y)=y^a+f_0=0$,
\begin{gather*}G((x,y),(z,w))=
-\sum_r c_{r,0}\sum_{k=1}^{a-1}\big\{\overline{\rm mod}((a-k)r,a)z^{\lceil\frac{(a-k)r}{a}\rceil}x^{\lfloor\frac{kr}{a}\rfloor}\\
\hphantom{G((x,y),(z,w))=-\sum_r c_{r,0}\sum_{k=1}^{a-1}}{}
+\underline{\rm mod}(kr,a)z^{\lfloor\frac{(a-k)r}{a}\rfloor}x^{\lceil\frac{kr}{a}\rceil} \}w^{k-1}y^{a-k-1},
\end{gather*}
when
\begin{gather*}r_{i,j}(z,w)= -\sum_{r=i+2}^b c_{r,0} (ar-a-r-ai-rj) z^{r-2-i}w^{a-2-j}
\end{gather*}
and converges to
\begin{gather*}-2f_0y^{a-2}=F_y^2\end{gather*} as $(z,w)\rightarrow (x,y)$.
\end{rei}
\begin{rei}[trigonal curves \cite{DD,GG}]
For the curve $F(x,y)=y^3-q(x)y-p(x)$ with $p(x)=-\sum\limits_{r=0}^{g+1}c_{r,0}x^r$ and $q(x)=-\sum\limits_{r=0}^{\lfloor\frac{2g+2}{3}\rfloor}c_{r,1}x^r$,
\begin{gather*}
 G((x,y),(z,w)) =\sum_r c_{r,1}x^r\sum_sc_{s,1}z^s+\sum_rc_{r,1}x^rw^2+\sum_sc_{s,1}z^sy^2+3y^2w^2\\
 \hphantom{G((x,y),(z,w)) =}{} -\sum_rc_{r,0}\big\{\overline{\rm mod}(2r,3)z^{\lceil 2r/3\rceil}x^{\lfloor r/3\rfloor}+
\underline{\rm mod}(r,3)z^{\lfloor 2r/3\rfloor}x^{\lceil r/3\rceil}\big\}y\\
\hphantom{G((x,y),(z,w)) =}{}
-\sum_rc_{r,0}\big\{\overline{\rm mod}(r,3)z^{\lceil r/3\rceil}x^{\lfloor 2r/3\rfloor}+ \underline{\rm mod}(2r,3)z^{\lfloor r/3\rfloor}x^{\lceil 2r/3\rceil}
\big\}w\\
\hphantom{G((x,y),(z,w)) =}{}
 -\sum_rc_{r,1}\big\{\overline{\rm mod}(r,2)z^{\lceil r/2\rceil}x^{\lfloor r/2\rfloor}+
\underline{\rm mod}(r,2)z^{\lfloor r/2\rfloor}x^{\lceil r/2\rceil}\big\}wy\\
\hphantom{G((x,y),(z,w))}{}
 = \sum_r c_{r,1}x^r\sum_sc_{s,1}z^s+\sum_rc_{r,1}x^rw^2+\sum_sc_{s,1}z^sy^2+3y^2w^2\\
\hphantom{G((x,y),(z,w)) =}{}
-\sum_{r=3m}c_{r,0}\big\{3z^{2r/3}x^{r/3}y+3z^{2r/3}x^{r/3}w\big\}\\
\hphantom{G((x,y),(z,w)) =}{}
-\sum_{r=3m+1}c_{r,0}\big\{2z^{(2r+1)/3}x^{(r-1)/3}y+z^{(2r-2)/3}x^{(r+2)/3}y\\
\hphantom{G((x,y),(z,w)) =-\sum_{r=3m+1}}{}
+z^{(r+2)/3}x^{(2r-2)/3}w+2z^{(r-1)/3}x^{(2r+1)/3}w\big\}\\
\hphantom{G((x,y),(z,w)) =}{}
 -\sum_{r=3m+2}c_{r,0}\big\{z^{(2r+2)/3}x^{(r-2)/3}y+2z^{(2r-1)/3}x^{(r+1)/3}y\\
\hphantom{G((x,y),(z,w)) =-\sum_{r=3m+2}}{}
+2z^{(r+1)/3}x^{(2r-1)/3}w+z^{(r-2)/3}x^{(2r+2)/3}w\big\}\\
\hphantom{G((x,y),(z,w)) =}{}
 -2\sum_{r=2l}c_{r,1}z^{r/2}x^{r/2}wy\\
\hphantom{G((x,y),(z,w)) =}{}
 -\sum_{r=2l+1}c_{r,1}\big\{z^{(r+1)/2}x^{(r-1)/2}+z^{(r-1)/2}x^{(r+1)/2}\big\}wy,
\end{gather*}
when
\begin{gather*}
r_{i,0}(z,w) = \sum_{r\geq 2i+2}\sum_s c_{r,1}c_{s,1}(r-i-1)z^{r+s-i-2} +\sum_{r\geq 2i+2}c_{r,1}(r-i-1)z^{r-i-2}w^2\\
\hphantom{r_{i,0}(z,w) =}{} +\sum_{r=i+2}^{\lfloor 3(i+1)/2\rfloor}c_{r,0}(3i+3-2r)z^{r-i-2}w
\end{gather*}
and
\begin{gather*}r_{i,1}(z,w)=\sum_{r=i+2}^{3i+3}c_{r,0}(3i+3-r)z^{r-i-2}+\sum_{r=i+2}^{2i+2}c_{r,1}(2i+2-r)z^{r-i-2}w.\end{gather*}
In this sense, Theorem~\ref{theorem1} contains the same formula \cite[p.~203]{GG} as a special case that was described as
\begin{gather*}
r_{i,0}(z,w) = -w^2\partial_zD_z^{i+1}q(z)+wz^{\lfloor i/2 \rfloor}D_z^{\lfloor 3i/2+2\rfloor}\{2z\partial_z p(z)-3(i+1)p(z) \}\\
\hphantom{r_{i,0}(z,w) =}{} +
z^iD_z^{2i+2}\left\{\frac{1}{2}z\partial q(z)^2-(i+1)q(z)^2\right\}
\end{gather*}
and
\begin{gather*}r_{i,1}(z,w)=wz^{i+1}D_z^{2i+3}\{z\partial_zq(z)-2(i+1)q(z)\}+z^{2i+2}D_z^{3i+4}\{z\partial_zp(z)-3(i+1)p(z)\},\end{gather*}
where $D_sp(z)$ denotes $\sum\limits_{k=s}^np_kz^{k-s}$ for polynomial $p(z)=\sum\limits_{k=0}^np_kz^k$.
\end{rei}

All of these applications of the general formula coincide with the known results \cite{DD,GG}.

\section{Open problem}\label{section4}

Let $F$ be a function f\/ield over $\mathbb C$, and $\cal O$ any place of degree one. We def\/ine the set
\begin{gather*}{L}:=\{f\in F\,|\,\operatorname{ord}_Q(f)\geq 0, \,Q\not={\cal O}\}\cup \{0\}\end{gather*}
of functions with poles only at $\cal O$ and the monoid $M:=\{-\operatorname{ord}_{\cal O}(f)|f\in L\}$. We choose generators $a_1,\dots,a_m$ of~$M$ and functions $x_1,\dots,x_m \in {L}$ such that $-\operatorname{ord}_{\cal O}(x_i)=a_i$, $i=1,\dots,m$, where $a_1,\dots,a_m$ are assumed to be mutually prime. For the homomorphism from the polynomial ring ${\mathbb C}[X_1,\dots,X_m]$ to the ring ${\mathbb C}[x_1,\dots,x_m]$ generated by $x_1,\dots,x_m$, we can regard a set of generators of the kernel as the def\/ining equations of the curve~$C$ (Miura~\cite{miura}). The kernel contains $m$ variables and at least $m-1$ equations. Let $d_i$ be the GCM of $a_1,\dots,a_i$. The sequence $a_1,\dots,a_m$ is referred to as telescopic if
\begin{gather*}\frac{a_i}{d_i}\in \left\langle\frac{a_1}{d_{i-1}},\dots,\frac{a_{i-1}}{d_{i-1}}\right\rangle \end{gather*}
for $i=2,\dots,m$. Suzuki~\cite{suzuki} proved that the number of equations is $m-1$ if and only if the sequence can be telescopic by permutating the order of the sequence.

This paper considered only the case $m=2$. Our future work contains solving the problem for telescopic and general Miura curves.

\appendix
\section{Proof of Proposition~\ref{proposition1}}
For the f\/irst part, without loss of generality, we assume $a<b$. For each $0\leq i\leq b-1$, let $f(i)$ be the number of integers $j$'s such that
\begin{gather*}j\leq a-1-\frac{a}{b}(i+1).\end{gather*}
Since
\begin{gather*}f(0)+f(b-2)=\left\lfloor a-1-\frac{a}{b}\right\rfloor +1+0=a-1=f(i)+f(b-i-2)\end{gather*}
for $i=0,\dots,b-2$, and $f(b-1)=0$, where $\lfloor x\rfloor$ is the largest integer no more than $x$, we have
\begin{gather*}\sum_{i=0}^{b-1}f(i)=\frac{1}{2}\sum_{i=0}^{b-2}\{f(i)+f(b-2-i)\}+f(b-1)=\frac{(a-1)(b-1)}{2},\end{gather*}
which means $ \#J(a,b)=\frac{(a-1)(b-1)}{2}$.

For the latter part, since
\begin{gather*}\operatorname{ord}_{\cal O}(dr_{i,j})=\operatorname{ord}_{\cal O}\left(\frac{x^iy^j}{\sum\limits_{k=1}^aky^{k-1}f_k}dx\right)=-(ai+bj)+ab-a-b\geq 0\end{gather*}
for $(i,j)\in J(a,b)$, each $dr_{i,j}$ is holomorphic. On the other hand, since $\operatorname{ord}_{\cal O}(dr_{i,j})\not=\operatorname{ord}_{\cal O}(dr_{i',j'})$ for $(i,j)\not=(i',j')$, the generators $\{dr_{i,j}\}$ consisting of $g$ dif\/ferentials are linearly independent and cover the vector space.

\section{Proof of Proposition~\ref{proposition2}}

From
\begin{gather*}
-w^i\frac{\partial h_j}{\partial w}+jw^{j-1}h_i -\frac{\partial h_i}{\partial y}w^j-jw^{j-1}y^{i-1}\\
\quad{}=-\sum_{k=1}^{j-1}kw^{i+k-1}y^{j-1-k}+j\sum_{k=0}^{i-1}w^{j+k-1}y^{i-1-k}-\sum_{k=1}^{i-1}(i-k)w^{j+k-1}y^{i-k-1}-jw^{j-1}y^{i-1}\\
\quad{} =-\sum_{k=i+1}^{i+j-1}(k-i)w^{k-1}y^{i+j-k-1}+\sum_{k=j+1}^{i+j-1}(k-i)w^{k-1}y^{i+j-k-1}\\
\quad{}=\begin{cases}
\sum\limits_{k=j+1}^{i}(k-i)w^{k-1}y^{i+j-k-1}, &i\geq j+1,\\
0, &i=j,\\
-\sum\limits_{k=i+1}^{j}(k-i)w^{k-1}y^{i+j-k-1},& i\leq j-1,
\end{cases}
\end{gather*}
and
\begin{gather*}
jw^{j-1}h_i-jw^ih_{j-1}-jw^{j-1}y^{i-1}
=
\begin{cases}
j\sum\limits_{k=j+1}^{i}w^{k-1}y^{i+j-1-k}, &i\geq j+1,\\
0, &i=j, \\
-j\sum\limits_{k=i+1}^{j}w^{k-1}y^{i+j-1-k}, &i\leq j-1,
\end{cases}
\end{gather*}
we have
\begin{gather*}
 I(i,j)- \big\{jw^{j-1}y^{i-1}g_i'g_j(x-z)+jw^{j-1}y^{i-1}g_ig_j\big\}\\
\qquad{} = \begin{cases}
\sum\limits_{k=j+1}^{i-1}g_j\{(k-i)g_i'(x-z)+jg_i\}w^{k-1}y^{i+j-k-1}, &i\geq j+1,\\
0, &i=j,\\
-\sum\limits_{k=i+1}^{j-1}g_j\{(k-i)g_i'(x-z)+jg_i\}w^{k-1}y^{i+j-k-1}, &i\leq j-1.
\end{cases}
\end{gather*}
Since $m\leq n$, we have $I(m,n)$ with $i=n$ and $j=m$ for $i\geq j+1$ and $I(m,n)$ with $i=m$ and $j=n$ for $i\leq j-1$, so that~(\ref{eq401}) 
is obtained.

\subsection*{Acknowledgements}
The author would like to thank the anonymous referees. The discussion with them was very helpful for publishing this paper.

\pdfbookmark[1]{References}{ref}
\LastPageEnding


\begin{thebibliography}{99}
\footnotesize\itemsep=0pt

\bibitem{ayano}
Ayano T., Sigma functions for telescopic curves, Ph.D.~Thesis, Osaka
 University, 2013.

\bibitem{baker}
Baker H.F., Abelian functions. Abel's theorem and the allied theory of theta
 functions, \textit{Cambridge Mathematical Library}, Cambridge University Press,
 Cambridge, 1995.

\bibitem{CC}
Buchstaber V.M., Enolski V.Z., Leykin D.V., Hyperelliptic {K}leinian functions
 and applications, in Solitons, Geometry, and Topology: on the Crossroad,
 \textit{Amer. Math. Soc. Transl. Ser.~2}, Vol.~179, Amer. Math. Soc.,
 Providence, RI, 1997, 1--33, \href{http://arxiv.org/abs/solv-int/9603005}{solv-int/9603005}.

\bibitem{EE}
Buchstaber V.M., Enolski V.Z., Leykin D.V., Kleinian functions, hyperelliptic
 {J}acobians and applications, \textit{Rev. Math and Math. Phys.} \textbf{10}
 (1997), no.~2, 1--125.

\bibitem{b}
Buchstaber V.M., Enolski V.Z., Leykin D.V., Rational analogues of abelian
 functions, \href{https://doi.org/10.1007/BF02465189}{\textit{Funct. Anal. Appl.}} \textbf{33} (1999), 83--94.

\bibitem{DD}
Buchstaber V.M., Enolskii V.Z., Leykin D.V., Uniformization of {J}acobi
 varieties of trigonal curves and nonlinear dif\/ferential equations,
 \href{https://doi.org/10.1007/BF02482405}{\textit{Funct. Anal. Appl.}} \textbf{34} (2000), 159--171.

\bibitem{GG}
Buchstaber V.M., Enolski V.Z., Leykin D.V., Multi-dimensional sigma-functions,
 \href{http://arxiv.org/abs/1208.0990}{arXiv:1208.0990}.

\bibitem{BB}
Buchstaber V.M., Leykin D.V., Addition laws on {J}acobians of plane algebraic
 curves, \textit{Proc. Steklov Inst. Math.} \textbf{251} (2005), 49--120.

\bibitem{cf}
Cassels J.W.S., Flynn E.V., Prolegomena to a middlebrow arithmetic of curves of
 genus~{$2$}, \href{https://doi.org/10.1017/CBO9780511526084}{\textit{London Mathematical Society Lecture Note Series}}, Vol.~230, Cambridge University Press, Cambridge, 1996.

\bibitem{onishi}
Eilbeck J.C., Enolski V.Z., Matsutani S., \^Onishi Y., Previato E., Abelian
 functions for trigonal curves of genus three, \href{https://doi.org/10.1093/imrn/rnm140}{\textit{Int. Math. Res. Not.}}
 \textbf{2008} (2008), rnm~140, 38~pages, \href{http://arxiv.org/abs/math.AG/0610019}{math.AG/0610019}.

\bibitem{AA}
Eilbeck J.C., Enolskii V.Z., Leykin D.V., On the {K}leinian construction of
 abelian functions of canonical algebraic curves, in S{IDE}~{III}~--
 Aymmetries and Integrability of Dif\/ference Equations ({S}abaudia, 1998),
 \textit{CRM Proc. Lecture Notes}, Vol.~25, Amer. Math. Soc., Providence, RI,
 2000, 121--138.

\bibitem{FF}
England M., Eilbeck J.C., Abelian functions associated with a cyclic tetragonal
 curve of genus six, \href{https://doi.org/10.1088/1751-8113/42/9/095210}{\textit{J.~Phys.~A: Math. Theor.}} \textbf{42} (2009),
 095210, 27~pages, \href{http://arxiv.org/abs/0806.2377}{arXiv:0806.2377}.

\bibitem{gal}
Galbraith S.D., Paulus S.M., Smart N.P., Arithmetic on superelliptic curves,
 \href{https://doi.org/10.1090/S0025-5718-00-01297-7}{\textit{Math. Comp.}} \textbf{71} (2002), 393--405.

\bibitem{klein1}
Klein F., Ueber hyperelliptische {S}igmafunctionen, \href{https://doi.org/10.1007/BF01445285}{\textit{Math. Ann.}}
 \textbf{27} (1886), 431--464.

\bibitem{klein2}
Klein F., Ueber hyperelliptische {S}igmafunctionen, \href{https://doi.org/10.1007/BF01443606}{\textit{Math. Ann.}}
 \textbf{32} (1888), 351--380.

\bibitem{miura}
Miura S., Error-correcting codes based on algebraic geometry, Ph.D.~Thesis,
 University of Tokyo, 1998.

\bibitem{nakayashiki}
Nakayashiki A., On algebraic expressions of sigma functions for {$(n,s)$}
 curves, \href{https://doi.org/10.4310/AJM.2010.v14.n2.a2}{\textit{Asian~J. Math.}} \textbf{14} (2010), 175--211,
 \href{http://arxiv.org/abs/0803.2083}{arXiv:0803.2083}.

\bibitem{suzuki}
Suzuki J., Miura conjecture on af\/f\/ine curves, \textit{Osaka~J. Math.}
 \textbf{44} (2007), 187--196.

\end{thebibliography}
\end{document}